\documentclass{amsart}
\usepackage{latexsym}
\usepackage{amssymb}
\usepackage{amsfonts}
\usepackage{amsmath}
\usepackage{amsthm}
\usepackage{graphics}
\usepackage[pdftex]{graphicx}
\usepackage{xcolor,soul}
\usepackage{float}
\sethlcolor{green}

\newtheorem{lemma}{Lemma}
\newtheorem{theorem}[lemma]{Theorem}

\newtheorem{definition}{Definition}

%
%
\def\gcd{{\rm gcd}}
\def\SL{\rm SL}
\def\PSL{\rm PSL}
\def\Sz{\rm Sz}
\def\cs{\rm cs}
\def\D{\rm D}
\def\K{\rm K}

\begin{document}
\openup 1.8\jot

\title[On divisibility graph for simple Zassenhaus groups]
{On divisibility graph for simple Zassenhaus groups}
\author[A.~Abdolghafourian]{A.~Abdolghafourian}
\address{A.~Abdolghafourian, Department of Mathematics \newline Yazd University
\\ Yazd, 89195-741, Iran}
\email{Adeleh Abdolghafourian@stu.yazd.ac.ir}
\author[Mohammad~A.~Iranmanesh]{Mohammad~A.~Iranmanesh}
\thanks{Corresponding author: Mohammad~A.~Iranmanesh}\address{M.~A.~Iranmanesh, Department of Mathematics \newline Yazd University\\ Yazd, 89195-741 , Iran}
\email{iranmanesh@yazd.ac.ir}

\begin{abstract}
The divisibility graph $\D(G)$ for a finite group $G$ is a graph with vertex set $\cs(G)\setminus\{1\}$
where $\cs(G)$ is the set of conjugacy class sizes of $G$. Two vertices $a$ and $b$ are adjacent whenever $a$ divides
$b$ or $b$ divides $a$. In this paper we will find $\D(G)$ where $G$ is a simple Zassenhaus group.
\end{abstract}
\openup 1.2\jot
\subjclass[2010]{20D05, 05C25}
\keywords{Divisibility graph, Zassenhaus groups, Simple groups}



\maketitle
\section{Introduction and Prelimits}

There are several graphs associated to algebraic structures, specially finite groups, and many interesting results have been obtained recently (see for example~\cite{Da, HI, IP, KG} ). In~\cite{CC} a new graph namely \emph{divisibility graph} which is related to a set of positive integers have been introduced. The divisibility graph, $\overrightarrow{D}(X)$ is a graph with vertex set $X^*=X \setminus \{1\}$ and there is an arc between two vertices $a$ and $b$ if and only if $a$ divides $b$. Since this graph is never strongly connected, we consider its underlying graph $\D(X)$ without changing the name for convenience. Also we use $\D(G)$ instead of  $\D(\cs(G))$ where $\cs(G)$ is the set of conjugacy class sizes of elements of $G$.

All groups considered here are finite. Let $H$ be a subgroup of a group $G.$ We use $[G:H]=\{Hg|~g\in G\}$
to denote the right cosets of $H$ in $G.$ The conjugacy class of an element $g\in G$, will
denote by $g^G$ which is the set of all $x^{-1}gx$ where $x\in G.$ Also we use $\cs(G)$ for the set of conjugacy class sizes of elements of $G.$
The conjugate of $h\in H$ by $g\in G$ and the conjugate of $H$ by $g$ are $h^g=g^{-1}hg$ and
$H^g=g^{-1}Hg$ respectively. Also by $C_G(g)=\{x\in G|~g^x=g~\}$, $Z(H)=\{z\in G|~g^z=g~for~every~g\in G~\}$ and $N_G(H)=\{g\in G|~H^g=g~\}$, we denote the centralizer of $g$ in $G$,
the center of $G$ and the normalizer of $H$ in $G$ respectively. An involution of $G$ is an element of order $2$. By $H$ is disjoint from its conjugates we mean that $H\cap H^g=\{e\}~or~H$, for every $g\in G$. We write $G=<g>$, to show that $G$ is generated by $g$. For a finite set $X$ we use $|X|$ for cardinality of $X$ and $\gcd(a,b)$ is the greatest common divisor of $a$ and $b$ where $a$ and $b$ are positive integers. For undefined terminology and notation about groups we refer to \cite{R}.

Throughout the paper all graphs are finite and simple. Let $\Gamma_1, \Gamma_2$ be two graphs then $\Gamma= \Gamma_1+ \Gamma_2$ means $\Gamma$ is disjoint union of
$\Gamma_1$ and $\Gamma_2$. The complete graph with $n$ vertices will denote by $\K_n$. For other notation about graphs we refer to \cite{we}.

The divisibility graph $\D(G)$, where $G$ is a symmetric group or an alternating group studied in~\cite{AI}. It is shown there that $\D(S_n)$ has at most two connected components
and if it is not connected then one of its connected components is $\K_1$. Also we have proved that $\D(A_n)$ has at most three connected components and if it is not connected then
two of its connected components are $\K_1$.

In this paper we consider $\D(G)$ where $G$ is a simple Zassenhaus group.

\begin{definition}\quad\cite[Chapter~13]{Go}\label{z1}
A permutation group $G$ is a Zassenhaus group if it is  doubly transitive
 in which only the identity fixes three letters.
\end{definition}
\begin{theorem}\quad\cite[Chapter~13]{Go}\label{z2}
Let $G$ be a Zassenhaus group of degree $n+1$ and let $N$ be the subgroup
fixing a letter. Then we have
\begin{enumerate}
\item[(i)] $N$ is a Frobenius group with kernel $K$ of order $n$ and complement $H$.
\item[(ii)]  $K$ is a Hall subgroup of $G$, $K$ is disjoint from its conjugates, $C_G(k_0)\leq K$ for every $k_0\in K$ and
$N = N_G(K)$.
\item[(iii)] $H$ is a subgroup of $G$ fixing two letters, $H$ is disjoint from its
conjugates in $G$, and $[N_G(H) :H] = 2$.
\item[(iv)] $|G| = en(n + 1)$, where $e = |H|$ and $e$ divides $n - 1$.
\item[(v)] If $e$ is even then $K$ is abelian.
\item[(vi)] If $e\geq(n - 1)/{2}$ then $K$ is an elementary abelian $p$-group for some
prime $p$.
\item[(vii)] If $G$ is simple and $e$ is odd then $H$ is cyclic. Also $G$ has only one conjugacy class of involutions of size $e(n + 1)$ if $n$ is even and of size $en$ if
$n$ is odd.
\end{enumerate}
In this case we say that $G$ is of type $(H,K)$.
\end{theorem}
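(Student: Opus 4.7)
The plan is to establish the seven parts of Theorem~\ref{z2} in the order stated, with parts (i)--(iv) flowing directly from the definition of a Zassenhaus group and standard permutation-group bookkeeping, and parts (v)--(vii) relying on deeper structural facts about Frobenius groups combined with the simplicity hypothesis.

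For part (i), since $G$ is doubly transitive on the $n+1$ letters and $N$ fixes one letter $\alpha$, $N$ acts transitively on the remaining $n$ letters. Let $H$ be the stabilizer in $N$ of a second letter $\beta$. If $g\in N\setminus H$ then $\beta^{g^{-1}}$ is distinct from both $\alpha$ and $\beta$, so any non-identity element of $H\cap H^{g}$ would fix three distinct letters, contradicting the Zassenhaus hypothesis; hence $H$ meets every distinct conjugate trivially in $N$, so $N$ is Frobenius with complement $H$, and Frobenius's theorem supplies the normal kernel $K$ of order $n$. For (ii), $|N|=en$ and $|G|=(n+1)en$, so $|G:K|=(n+1)e$ is coprime to $|K|=n$ using $\gcd(n,n+1)=1$ together with $e\mid n-1$; the relations $C_G(k_0)\le K$ and $N_G(K)=N$ follow from the fact that a non-identity element of $K$ fixes only $\alpha$ among the letters. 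For (iii), $N_G(H)$ is the setwise stabilizer of $\{\alpha,\beta\}$ and has index $2$ over $H$ by double transitivity, while disjointness of $H$ from its $G$-conjugates is again the three-letter argument. For (iv), orbit--stabilizer gives $|G|=(n+1)en$, and $e\mid n-1$ follows because $H$ permutes $K\setminus\{1\}$ by conjugation without fixed points, yielding orbits of size $e$.

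For (v), if $|H|$ is even then an involution $t\in H$ acts fixed-point-freely on $K$, and the classical identity $k^{t}=k^{-1}$ for all $k\in K$ (derived from the bijectivity of $k\mapsto k^{-1}k^{t}$ on $K$) forces $K$ to be abelian, since $(k_1k_2)^{-1}=k_2^{-1}k_1^{-1}$ equals $(k_1k_2)^t=k_1^{-1}k_2^{-1}$. For (vi), $H$ partitions $K\setminus\{1\}$ into orbits of size $e$, so the hypothesis $e\ge (n-1)/2$ leaves at most two orbits; combining this with Thompson's theorem that the Frobenius kernel is nilpotent and a direct analysis of the Sylow subgroups of $K$ then forces $K$ to be an elementary abelian $p$-group. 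For (vii), odd-order Frobenius complements are known to be metacyclic, and the simplicity of $G$ together with a count of involutions via the class equation (with centralizer orders dictated by the parity of $n$) forces $H$ to be cyclic and yields the stated conjugacy class size.

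The main obstacle I anticipate is part (vii): it depends on the classification of odd-order Frobenius complements as metacyclic groups and on a subtle interaction of simplicity with the Sylow $2$-structure of $G$ to pin down a unique involution class of the predicted size; by contrast, parts (i)--(iv) are essentially unpackings of the Zassenhaus hypothesis via Frobenius's theorem, and parts (v)--(vi) are standard consequences of the theory of fixed-point-free actions on Frobenius kernels.
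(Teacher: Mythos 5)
The paper supplies no proof of this theorem at all---it is quoted verbatim from Gorenstein \cite[Chapter~13]{Go}---so there is no in-paper argument to compare against, and your outline has to be judged on its own terms. Parts (i)--(iv) are handled correctly: the three-letter condition gives $H\cap H^g=1$ for $g\in N\setminus H$, Frobenius's theorem supplies the kernel $K$ of order $n$, and the Hall property, the containment $C_G(k_0)\le K$, the equalities $N_G(K)=N$ and $[N_G(H):H]=2$, and the divisibility $e\mid n-1$ all follow from the fixed-point bookkeeping and the semiregular action of $H$ on $K\setminus\{1\}$ exactly as you describe. Part (v) is also complete: the bijectivity of $k\mapsto k^{-1}k^t$ showing that a fixed-point-free involution inverts $K$, hence forces it abelian, is the standard argument.

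The genuine gaps are in (vi) and, more seriously, (vii). For (vi), ``at most two $H$-orbits on $K\setminus\{1\}$ plus Thompson nilpotency'' does not by itself yield elementary abelian: you still have to exclude exponent $p^2$ and nonabelian groups of exponent $p$. This is doable (the elements of order dividing $p$ number $p^k$ for some $k$, and neither a single orbit of length $(n-1)/2$ nor the analogous sub-sums have the form $p^k-1$; and if $K$ were nonabelian of exponent $p$ then $Z(K)\setminus\{1\}$ would be a proper union of orbits forcing $|Z(K)|=(n+1)/2$, which is not a $p$-power), but ``a direct analysis of the Sylow subgroups'' names none of this. For (vii) the proposal essentially restates the claim: ``odd-order Frobenius complements are metacyclic'' is strictly weaker than ``cyclic,'' and you give no mechanism by which simplicity of $G$ upgrades the former to the latter; likewise ``a count of involutions via the class equation'' does not explain why there is a \emph{unique} class of involutions, nor why the centralizer of an involution has order exactly $n$ (when $n$ is even) or $n+1$ (when $n$ is odd). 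In Gorenstein's Chapter~13 this is the substantial content of the chapter, resting on Feit--Suzuki-type counting and character-theoretic arguments, and it cannot be dispatched by the class equation alone. As written, (i)--(v) are proved, (vi) is an incomplete sketch, and (vii) is an assertion with a named obstacle rather than a proof.
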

Also $G$ has only one conjugacy class of involutions of size $e(n + 1)$ if $n$ is even and of size $en$ if
$n$ is odd.

In this case we say that $G$ is of type $(H,K)$.
As it is stated in \cite[Chapter~16, Page 488]{Go}, $\PSL(2, q)$, the projective special linear group of dimension two over a finite field $\mathbb{F}_q$ of order $q>3$, and
$\Sz(q)$, Suzuki simple groups of order $q^2(q-1)(q^2+1)$ where $q=2^m$ and $m>1$ is an odd number are the only simple
Zassenhaus groups. (For more details about the structure of $\PSL(2, q)$ and $\Sz(q)$ we refer to~\cite{D} and \cite{S} respectively.)

According to the notations of Theorem~\ref{z2}, the projective special linear groups $\PSL(2,q)$ are Zassenhaus group of type $(H,K)$ of degree $n+1$ where
$|K|=n=q$, $e=(n-1)/{2}$ if $n$ is odd, and $e=n-1$ otherwise.
Also  $H$ is cyclic group that inverted by an involution \cite[Chapter~13]{Go}.
So by Theorem \ref{z1}, $|G|=q(q^2-1)/2$ if $q$ is odd and $|G|=q(q^2-1)$ otherwise. It is proved in~\cite[Chapter~XII.]{D} that the projective special linear groups, $\PSL(2,q)$, has a cyclic subgroup namely $L$ of
order ${(q+1)}/{\gcd(2,q+1)}$ such that $x^{\PSL(2,q)}\cap L=\{x,x^{-1}\}$ for every $x\in L$. $L$ is disjoint from its conjugates in $G$ and $[N_G(L):L]=2$.

Also by Theorem~\ref{z2}, we see that the Suzuki simple groups, $\Sz(q)$, are Zassenhaus group of type $(H, K)$ with $H$ is cyclic of order $q - 1$ and $K$ is a
nonabelian 2-group of order $q^2$ with elementary abelian center of order $q$ and $N=HK$ is the subgroup fixing a letter~\cite[Page ~466]{Go}. So by Theorem~\ref{z2},
$|G|=q^2(q-1)(q^2+1)$ where $q=2^m$ and $m>1$ is an odd number.

Not that by~\cite{FP} the Suzuki simple groups, $\Sz(q)$, can be defined as a subgroup of the group $\SL_4(q)$. In this representation,
$K=<(\alpha,\beta)~|~\alpha,~\beta\in \mathbb{F}_q>$ and $Z(K)=\{(0,\beta)~|~\beta\in \mathbb{F}_q\}$ where $(\alpha,\beta)$ is defined
\[
(\alpha,\beta)=
\left[ {\begin{array}{cccc}
1 & 0 & 0 & 0 \\
\alpha & 1 & 0 & 0 \\
\alpha^{1+r}+\beta & \alpha^r & 1 & 0 \\
\alpha^{2+r}+\alpha\beta+\beta^r & \beta & \alpha & 1
\end{array} } \right],
\]
with multiplicity $(\alpha,\beta)(\gamma,\delta)=(\alpha+\gamma,\alpha\gamma^r+\beta+\delta)$ and $r^2=2q$.
Note that this implies that all $1\neq x\in Z(K)$ are involutions.

The following three lemmas about Suzuki groups will be useful.
\begin{lemma}\quad\cite[Proposition~16]{S}\label{s1}
$\Sz(q)$ contains abelian subgroups $A_0,~A_1$ and $A_2$ of order $q-1$ and $q+r+1$ and $q-r+1,~(r^2=2q)$, respectively. $C_G(x)=A_i$ for every $x\in A_i$ and $i=0,1,2$. Moreover  $[N_G(A_i):A_i]=4$ for $i=1,2$.
\end{lemma}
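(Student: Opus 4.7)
The plan is to exploit the factorization $|G| = q^2(q-1)(q^2+1)$ together with $q^2 + 1 = (q+r+1)(q-r+1)$ (where $r^2 = 2q$) and the matrix realization of $\Sz(q)$ inside $\SL_4(q)$ recalled just before the statement. The three subgroups $A_0, A_1, A_2$ will arise as the ``maximal tori'' responsible for the three pairwise coprime factors $q-1$, $q+r+1$, $q-r+1$ of $|G|/q^2$.

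First, set $A_0 := H$, the Frobenius complement of the one-point stabilizer $N = KH$ from Theorem~\ref{z2}. The preceding description of $\Sz(q)$ as a Zassenhaus group tells us $H$ is cyclic of order $q-1$, so $A_0$ is abelian of the required order. For $1 \neq x \in A_0$, any element centralizing $x$ normalizes $\langle x\rangle \subseteq A_0$ and hence lies in $N_G(A_0) = N$; since $N$ is a Frobenius group with complement $A_0$, the centralizer of a nontrivial complement element equals the complement, giving $C_G(x) = A_0$.

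For $A_1$ and $A_2$, observe first that the two factors $q\pm r+1$ are both odd (as $q$ and $r$ are powers of $2$) and their $\gcd$ divides $2r$, hence equals $1$. The existence of elements of orders $q+r+1$ and $q-r+1$ is then produced from the matrix realization: one exhibits a semisimple element whose eigenvalues (over $\mathbb{F}_{q^4}$) generate the corresponding multiplicative subgroup of order $q^2+1$, and its two Hall components have the required orders. Define $A_i$ to be the cyclic group generated. The centralizer statement $C_G(x) = A_i$ then follows from coprimality: since $|A_i|$ is coprime both to $|N| = q^2(q-1)$ and to the complementary factor, any element commuting with a generator of $A_i$ forces $C_G(x)$ to be a cyclic Hall subgroup of order $|A_i|$, hence equal to $A_i$.

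The main obstacle is the final clause $[N_G(A_i):A_i] = 4$. The strategy is a counting argument: since $A_i$ is disjoint from its conjugates (immediate from $C_G(x) = A_i$), we have
\[
\left| \bigcup_{g \in G} (A_i^g \setminus \{1\}) \right| = [G:N_G(A_i)] \, (|A_i|-1).
\]
Equating this with the total count of elements of $G$ whose order divides $|A_i|$, which the matrix realization supplies by classifying the relevant semisimple conjugacy classes, pins down $[G:N_G(A_i)] = |G|/(4|A_i|)$. Equivalently, one can produce an explicit element of order $4$ in $G$ that normalizes $A_i$ and acts faithfully on it via an automorphism of the form $y \mapsto y^q$ (this Frobenius-like action is what accounts for the ``$4$''), and then a Sylow-type argument confirms that these four cosets exhaust $N_G(A_i)/A_i$.
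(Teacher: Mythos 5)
The paper offers no proof of this lemma: it is imported verbatim from Suzuki \cite[Proposition~16]{S}, so your attempt must stand on its own, and it contains both genuine gaps and one outright false step. For $A_0=H$ you assert $N_G(A_0)=N$ where $N=KH$ is the point stabilizer; this is false, since by Theorem~\ref{z2}(iii) we have $[N_G(H):H]=2$, so $|N_G(A_0)|=2(q-1)$ while $|N|=q^2(q-1)$ (a Frobenius complement is never normal in its Frobenius group). The chain $C_G(x)\leq N_G(\langle x\rangle)\leq N_G(A_0)$ also silently uses that $A_0$ is disjoint from its conjugates, a fact you only extract later from the very centralizer statement you are trying to prove. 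More seriously, your construction of $A_1$ and $A_2$ starts from a single semisimple element whose eigenvalues generate the subgroup of order $q^2+1$ of $\mathbb{F}_{q^4}^{*}$ and takes its two Hall components; such an element would have order $q^2+1=(q+r+1)(q-r+1)$, and $\Sz(q)$ contains no element of that order --- every element order divides one of $4$, $q-1$, $q+r+1$, $q-r+1$. The two tori must be produced separately; they are not the Hall pieces of a single cyclic subgroup.

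The two remaining claims are asserted rather than proved. That $C_G(x)=A_i$ ``follows from coprimality'' is not an argument: knowing that $|A_i|$ is coprime to $q^2(q-1)$ and to $|A_{3-i}|$ does not preclude a generator of $A_i$ from commuting with an involution or with an element of order dividing $q-1$; excluding exactly this is the substance of Suzuki's computation, and in any case the lemma requires $C_G(x)=A_i$ for every nontrivial $x\in A_i$, not only for generators. Finally, for $[N_G(A_i):A_i]=4$ the counting route presupposes a classification of the relevant semisimple classes that is essentially equivalent to what is being proved, while the alternative route exhibits an element inducing $y\mapsto y^{q}$ on $A_i$ (which indeed has order $4$ since $q^2\equiv -1\pmod{|A_i|}$) but gives no reason why these four cosets exhaust $N_G(A_i)/A_i$; one still needs $C_G(A_i)=A_i$ together with an upper bound on the image of $N_G(A_i)/A_i$ in the automorphism group of the cyclic group $A_i$. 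As it stands the proposal is a plausible roadmap but not a proof; the paper's own treatment --- simply citing \cite{S} --- is the defensible option here.
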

\begin{lemma}\quad\cite[Lemma~3.3]{FP}\label{s4}
The subgroups $A_i$'s are disjoint from their conjugates for $i=0,~1,~2$.
\end{lemma}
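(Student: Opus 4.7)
The plan is to exploit the self-centralizing property of $A_i$ recorded in Lemma~\ref{s1}: every non-identity element of $A_i$ has centralizer exactly $A_i$ in $G$. This rigidity should force any nontrivial intersection of $A_i$ with a conjugate $A_i^g$ to coincide with all of $A_i$, which is exactly the disjointness condition defined in the paper.

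First, I would fix $i\in\{0,1,2\}$ and an arbitrary $g\in G$, and suppose that $A_i\cap A_i^g\neq\{e\}$. Pick a non-identity element $y$ in this intersection. Since $y\in A_i$, Lemma~\ref{s1} gives $C_G(y)=A_i$. On the other hand, $y\in A_i^g$ means $y=g^{-1}xg$ for some non-identity $x\in A_i$, and then
\[
C_G(y)=C_G(g^{-1}xg)=g^{-1}C_G(x)g=g^{-1}A_ig=A_i^g,
\]
again invoking Lemma~\ref{s1}, this time applied to $x\in A_i$.

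Comparing the two expressions for $C_G(y)$ yields $A_i=A_i^g$, hence $A_i\cap A_i^g=A_i$. Thus for every $g\in G$ the intersection $A_i\cap A_i^g$ is either trivial or all of $A_i$, which is precisely the disjointness condition.

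I do not expect a serious obstacle here; the only point requiring a moment of care is ensuring that the preimage $x$ is non-identity so that Lemma~\ref{s1} legitimately applies, and this is automatic because $y\neq e$ and conjugation is a bijection. The argument is uniform in $i$, so no case split between $A_0$, $A_1$, and $A_2$ is needed, and the statement reduces directly to the self-centralizing structure already recorded.
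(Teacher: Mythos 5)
Your argument is correct. Note, however, that the paper itself gives no proof of this statement: it is quoted verbatim from Fang and Praeger \cite[Lemma~3.3]{FP}, so there is nothing internal to compare against. What you have done is supply a short self-contained derivation from Lemma~\ref{s1}, and it works: if $e\neq y\in A_i\cap A_i^g$, then $C_G(y)=A_i$ because $y$ is a non-identity element of $A_i$, while writing $y=g^{-1}xg$ with $e\neq x\in A_i$ gives $C_G(y)=g^{-1}C_G(x)g=A_i^g$; equating the two forces $A_i=A_i^g$, which is exactly the disjointness condition $A_i\cap A_i^g\in\{\{e\},A_i\}$ used in the paper. Your care about the preimage $x$ being non-identity is the only delicate point, and you handle it correctly (the stated hypothesis ``$C_G(x)=A_i$ for every $x\in A_i$'' must of course be read as excluding $x=e$). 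This is the standard ``self-centralizing implies trivial intersection with conjugates'' argument, it applies uniformly to $A_0$, $A_1$, $A_2$ since Lemma~\ref{s1} asserts the centralizer property for all three, and it makes the paper's reliance on the external reference unnecessary for this particular fact.
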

By Theorem~\ref{z2}, we see that $H=A_0$.
\begin{lemma}\quad\cite[Lemma~3.2., Part~(h)]{S}\label{s2}
Let $x=(\alpha,\beta)\in K$, where $\alpha\neq 0$. Then $C_G(x)=<(\alpha,0),Z(K)>$, where $Z(K)=\{(0,\xi):\xi \in \mathbb{F}_q\}$.
\end{lemma}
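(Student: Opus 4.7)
The plan is to reduce the computation to a centralizer inside $K$ and then solve a small scalar equation over $\mathbb{F}_q$. By Theorem~\ref{z2}(ii), the centralizer of any nontrivial element of $K$ already lies in $K$, so $C_G(x)=C_K(x)$, and I only need to describe the pairs $(\gamma,\delta)\in K$ commuting with $x=(\alpha,\beta)$. Using the multiplication rule and characteristic $2$, the $\beta,\delta$ contributions cancel and the commutation condition collapses to the single scalar equation $\gamma\alpha^r=\alpha\gamma^r$.

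For the inclusion $\langle(\alpha,0),Z(K)\rangle\subseteq C_G(x)$, I would verify each generator directly: for $(0,\xi)\in Z(K)$ both products equal $(\alpha,\beta+\xi)$, and for $(\alpha,0)$ both equal $(0,\alpha^{1+r}+\beta)$. To determine the order of this subgroup, note that $(\alpha,0)^2=(0,\alpha^{1+r})\in Z(K)\setminus\{1\}$ while $(\alpha,0)\notin Z(K)$, so $\langle(\alpha,0),Z(K)\rangle$ has order $2q$.

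For the reverse inclusion, using the hypothesis $\alpha\neq 0$ I would set $u=\gamma\alpha^{-1}$ and rewrite $\gamma\alpha^r=\alpha\gamma^r$ as $u^r=u$. Since $q=2^m$ with $m$ odd and $r^2=2q$ forces $r=2^{(m+1)/2}$, the map $x\mapsto x^r$ is the Frobenius power $\phi^{(m+1)/2}$ inside $\mathrm{Gal}(\mathbb{F}_q/\mathbb{F}_2)$. Its fixed field has order $2^{\gcd((m+1)/2,m)}=2$, so the only solutions are $u\in\{0,1\}$, i.e.\ $\gamma\in\{0,\alpha\}$. Letting $\delta$ range freely over $\mathbb{F}_q$ gives exactly $2q$ centralizing elements, and comparing with the previous paragraph yields the desired equality. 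The main obstacle is this Galois-theoretic step identifying the $\mathbb{F}_q$-solutions of $X^r=X$; everything else is routine manipulation of the explicit cocycle defining $K$.
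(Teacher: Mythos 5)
Your argument is correct, but it is worth noting that the paper offers no proof of this statement at all: it is quoted verbatim from Suzuki \cite[Lemma~3.2, Part~(h)]{S}, so any comparison is between your self-contained computation and a bare citation. Your route is sound: Theorem~\ref{z2}(ii) gives $C_G(x)\leq K$ for $x\neq 1$, reducing everything to $C_K(x)$; the cocycle $(\alpha,\beta)(\gamma,\delta)=(\alpha+\gamma,\alpha\gamma^r+\beta+\delta)$ in characteristic $2$ does collapse the commutation condition to $\alpha\gamma^r=\gamma\alpha^r$; and since $r=2^{(m+1)/2}$ with $\gcd\bigl((m+1)/2,\,m\bigr)=1$ (any common divisor divides both $m$ and $m+1$), the fixed field of $u\mapsto u^r$ in $\mathbb{F}_{2^m}$ is indeed $\mathbb{F}_2$, forcing $\gamma\in\{0,\alpha\}$. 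The order count $|\langle(\alpha,0),Z(K)\rangle|=2q$ via $(\alpha,0)^2=(0,\alpha^{1+r})\in Z(K)$ matches the $2q$ solutions $(\gamma,\delta)$ with $\gamma\in\{0,\alpha\}$ and $\delta$ free, so the two inclusions close up. What your approach buys is a verification internal to the explicit $\SL_4(q)$ parametrization of $K$ that the paper records from \cite{FP}, making the lemma independent of Suzuki's original (and differently phrased) setup; what the citation buys is brevity and the guarantee that the parametrization used here really agrees with the one in \cite{S}, which is the only point your argument implicitly takes on faith.
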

Now we prove the following lemma.
\begin{lemma}\quad\label{z3}
Let $G$ be a group and $H$ a subgroup of $G$ which is disjoint from its conjugates in $G$. Suppose $h$ is a non identity element of $H$,  $N_G(H)=N$, and $C_G(h)=C$.
Then $|N|=|h^G\cap H||C|$.
\end{lemma}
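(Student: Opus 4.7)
The plan is to count, in two ways, the set $S=\{g\in G: h^g\in H\}$. One count will give $|N|$ directly, and the other will produce $|h^G\cap H|\cdot|C|$.

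First I would show $S=N$. If $g\in N$, then $H^g=H$, so $h^g\in H^g=H$, giving $g\in S$. Conversely, suppose $g\in S$, so $h^g\in H$. Since $h^g\in H^g$ as well, we have $h^g\in H\cap H^g$. Because $h\neq 1$ we also have $h^g\neq 1$, so $H\cap H^g$ is nontrivial. The hypothesis that $H$ is disjoint from its conjugates then forces $H\cap H^g=H$, i.e.\ $H^g=H$, so $g\in N_G(H)=N$. This equality $S=N$ is the crucial step and the only place where the TI-hypothesis on $H$ is used; it is also the main (and only) real obstacle, though it is short.

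Next I would partition $S$ according to the value of $h^g$. For each $x\in h^G\cap H$ choose $g_x\in G$ with $h^{g_x}=x$; then the fibre $\{g\in G: h^g=x\}$ is the coset $C g_x$ of the stabiliser $C=C_G(h)$, and hence has cardinality $|C|$. Summing over the (disjoint) fibres gives
\[
|S|=\sum_{x\in h^G\cap H}|C|=|h^G\cap H|\cdot|C|.
\]
Combining with $|S|=|N|$ from the first step yields $|N|=|h^G\cap H|\cdot|C|$, as required. No further computation is needed, so the write-up should be only a few lines.
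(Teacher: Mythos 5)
Your proof is correct and is essentially the paper's argument in a different guise: the paper constructs an explicit bijection $\theta(Cg)=h^g$ from the right cosets of $C$ in $N$ onto $h^G\cap H$, which is exactly your fibre decomposition of the set $S=\{g\in G: h^g\in H\}=N$, and both hinge on the same use of the TI-hypothesis to show $h^g\in H$ if and only if $g\in N$. No substantive difference.
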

\begin{proof}
Let $c\in C$. So $h^c=h$ and $H^c\cap H\neq \{e\}$. Since $H$ is disjoint from its conjugates, we can conclude $H^c=H$. Hence $c\in N$ and $C\leq N$. Define a map
$\theta$ from $[N:C]$ to $h^G\cap H$, by $\theta(Cg)=h^g$. Suppose $g_0$ is an arbitrary element of $G$. Since $H$ is disjoint from its conjugates, we conclude $h^{g_0}\in H$
if and only if $g_0\in N$. Also $Cx=Cy$ for $x, y\in N$ if and only if $xy^{-1}\in C$, which implies that $\theta$ is well defined and injective. It is easy to see that
$\theta$ is also surjective. This completes the proof.
\end{proof}

In the next section we will calculate the conjugacy class sizes of elements of the projective special linear groups $\PSL(2,q)$ and the Suzuki groups
$\Sz(q)$, when they are simple. The structure of divisibility graph for these groups will also investigate.

\section{The structure of divisibility graph for simple Zassenhaus groups}
\label{sec:main}
This section includes two theorems. We will determine the structure of divisibility graphs for $\PSL(2,q)$ and $\Sz(q)$ when they are simple.
\begin{theorem}\quad\label{p2}
Let $G=\PSL(2,q)$. Then $\D(G)$ is one of the graphs of the following list:
\begin{enumerate}
\item[(i)] $3\K_1$
\item[(ii)] $\K_2+2\K_1$.
\end{enumerate}
\end{theorem}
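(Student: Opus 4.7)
The plan is to enumerate $\cs(G)\setminus\{1\}$ explicitly using the Zassenhaus structure, and then read off $\D(G)$ by checking which pairs of class sizes divide one another. By Theorem~\ref{z2} and the remarks following it, $G=\PSL(2,q)$ has a Frobenius kernel $K$ of order $q$, a cyclic complement $H$ of order $e=(q-1)/\gcd(2,q-1)$ inverted by an outer involution, and a further cyclic subgroup $L$ of order $(q+1)/\gcd(2,q+1)$ also inverted by an outer involution; both $H$ and $L$ are disjoint from their conjugates with $[N_G(H):H]=[N_G(L):L]=2$. Property~(vi) of Theorem~\ref{z2} applies in both parities of $q$ (one checks $e\geq(q-1)/2$ in each case), so $K$ is elementary abelian and property~(ii) then yields $C_G(k)=K$ for every $k\in K^*$. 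Since every non-identity element of $G$ lies in a conjugate of exactly one of $K$, $H$, $L$, it suffices to compute the centralizer of a single representative from each, plus the unique involution class.

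The centralizers of elements of $H$ and $L$ are extracted from Lemma~\ref{z3}: the outer involution inverts $H$, so $|h^G\cap H|=2$ for a non-involutory $h\in H^*$ but $|h^G\cap H|=1$ for the (unique) involution of $H$ when $H$ has even order; Lemma~\ref{z3} then gives $C_G(h)=H$ at regular $h$ and $|C_G(h)|=2|H|$ at an involution of $H$. The same reasoning applies verbatim to $L$.

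For $q$ even, $|H|=q-1$ and $|L|=q+1$, and $K$ consists entirely of involutions (all conjugate), so $\cs(G)\setminus\{1\}=\{q^2-1,\,q(q+1),\,q(q-1)\}$; the three pairwise ratios reduce to $(q\pm1)/q$ and $(q+1)/(q-1)$, none an integer for $q\geq 4$, giving $\D(G)=3\K_1$. For $q$ odd, $|H|=(q-1)/2$ and $|L|=(q+1)/2$, and according as $q\equiv 1$ or $3\pmod 4$ the unique involution of $G$ lies in $H$ or $L$, producing an involution class of size $q(q+\epsilon)/2$ with $\epsilon=+1$ or $-1$ respectively. For $q\geq 7$ this yields the four distinct sizes $(q^2-1)/2,\,q(q+\epsilon)/2,\,q(q-1),\,q(q+1)$; a short check shows the only divisibility among them is $q(q+\epsilon)/2\mid q(q+\epsilon)$, giving $\D(G)=\K_2+2\K_1$. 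The case $q=5$ is exceptional: $|H|=2$ leaves no regular element of $H$, the size $q(q+1)=30$ does not appear, and the three remaining sizes $\{12,15,20\}$ are pairwise nondividing, so $\D(G)=3\K_1$ again.

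The delicate step is the involution calculation in the odd-$q$ case: one must observe that $|h^G\cap H|=1$ (rather than $2$) for the involution of $H$, which via Lemma~\ref{z3} forces $|C_G(h)|=2|H|$ and produces the halving relation $q(q+\epsilon)/2\mid q(q+\epsilon)$ that is responsible for the single edge. One must also not miss the $q=5$ exception where the four-vertex pattern collapses to three. Everything else reduces to routine inspection of ratios of the form $(q\pm 1)/q$, $2q/(q\mp 1)$, $(q+1)/(q-1)$, and $2(q\mp\epsilon)/(q\pm\epsilon)$, none of which is an integer in the relevant range.
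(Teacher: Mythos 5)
Your proposal is correct and follows essentially the same route as the paper: it uses the same decomposition of the nontrivial elements among conjugates of $K$, $H$ and $L$, the same key Lemma~\ref{z3} to pin down centralizers (with the involution in $H$ or $L$ producing the halved class size and hence the single edge), and the same final divisibility inspection. The only differences are presentational: you assert that the conjugates of $K$, $H$, $L$ cover $G$ where the paper verifies this via the class equation, and you are more explicit than the paper about which of the two graphs occurs for which $q$ (including the $q=5$ degeneration, which the paper's formulas handle only implicitly).
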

\begin{proof}
To calculate the conjugacy class sizes of non identity elements of $G$, we consider three subgroups $H,~K$ and $L$ which their orders are pairwise coprime.
So the non identity elements of these groups are not conjugate.

First consider the subgroup $H$.
Let $H=<h_0>$. By the structure of $G$, we know that $H$ is inverted by an involution.
So every elements of $H$ is cinjugate to its inverse and $N_G(H)\neq C_G(H)$. Also $H\leq C_G(h_0)=C_G(H)$.
Since by part (iii) of Theorem~\ref{z2}, $[N_G(H):H]=2$, then $C_G(h_0)=H$.
This implies $|h_0^G|=q(q+1)$.

Consider an arbitrary element $1\neq h\in H$. Clearly $C_G(h_0)\leq C_G(h)$.
If $C_G(h_0)= C_G(h)$ then $|h^G|=q(q+1)$ and $h$ is different from its inverse as $h_0$ is.
If $C_G(h_0)\neq C_G(h)$
then, since $H$ is disjoint from its conjugate, $C_G(h)=N_G(H)$.
So by Lemma~\ref{z3}, $h$ has only one conjugate in $H$.
Hence  $h=h^{-1}.$ This means that $h$ is an involution and $|h^G|={q(q+1)}/{2}$.
Therefore if $e$ is even, there exist one conjugacy classes of size ${q(q+1)}/{2}$ related to $h_0^{{e}/{2}}$
and ${(e-2)}/{2}$ conjugacy class of size $q(q+1)$ in $G$, related to other non identity elements of $H$.
If $e$ is odd then $G$ has $(e-1)/{2}$ conjugacy classes of size $q(q+1)$ that contains non identity elements of $H$.

Now consider the subgroup $K$. First suppose $q=2^\alpha$.
In this case by Theorem \ref{z2} parts (vi) and (vii), $K$ is an elementary abelian 2-group and
$G$ has only one conjugacy class of involutions.
So every element of $K$ contains in the unique conjugacy class of involutions of size $(q^2-1)$.

Suppose $q=p^\alpha$ is an odd number. By parts (ii) and (vi) of Theorem \ref{z2}, $K=C_G(y)$ for each $y\in K$.
Since $[N:K]=e$, by Lemma~\ref{z3}, $|y^G\cap K|=e$.
So we can partition $K\setminus\{1\}$ into two disjoint subset of length $e$, each of them contains conjugate elements.
Hence in this case, there are two conjugacy classes in $G$ of size $(q^2-1)/2$ that contains elements of $K$.

Finally consider the subgroup $L$. Since for every $x\in L$ we have $x^{\PSL(2,q)}\cap L=\{x,x^{-1}\}$ by Lemma~\ref{z3},
if $x\in L$ is not an involution then we have $|C_G(x)|=|N_G(L)|/2=|L|$ and if $x\in L$ is an involution then  $|C_G(x)|=|N_G(L)|=2|L|$.
So $|L|$ is even then there exist $(|L|-2)/2$ conjugacy classes in $G$ of size
$\gcd(2,q+1)eq$ and one conjugacy class of size $\gcd(2,q+1)eq/2$ that contain elements of $L$.
If $|L|$ is odd, then there exist $(|L|-1)/2$ conjugacy classes in $G$ of size $\gcd(2,q+1)eq$ that contain elements of $L$.

We must consider two following cases:\\
(Case 1) $G=\PSL(2,2^k)$. In this case $e=n-1=q-1$ is an odd number. By using class equation of $G$  we have;
\setlength\arraycolsep{1.4pt}
\begin{eqnarray*}
1\ +\ {(|L|-1)}/{2}\ .\ eq\ +\ (q-1)(q+1)\ +\ (({e-1})/{2})\ .\ q(q+1)\ &=& \\1\ +\ {q^2(q-1)}/{2}\ +\ (q^2-1)\ +\ {(q-2)q(q+1)}/{2}\ &=& \ |G|.
\end{eqnarray*}
(Case 2) $G=\PSL(2,p^k)$ where p is an odd prime. In this case $e=(n-1)/{2}=(q-1)/{2}$. First assume that $e$ is odd. In this case we have;
\setlength\arraycolsep{1.4pt}
\begin{eqnarray*}
1\ +\ ((|L|-2)~/~2)~.\ 2eq\ +\ eq\ +\ 2\ .\ e(q+1)\ +\ ({(e-1)}/{2})\ .\ q(q+1)\ &= & \\ 1\ + {q(q-1)(q-3)}/{4}\ +\ {q(q-1)}/{2}\ +\ (q^2-1)\ +\ {(q-3)q(q+1)}/{4}\ &=& |G|.
\end{eqnarray*}
In case where $e$ is even we have;
\setlength\arraycolsep{1.4pt}
\begin{eqnarray*}
1\ +\ ({(|L|-1)}/{2})\ .\ 2eq\ +\ 2\ . e(q+1)\ +\ ({(e-2)}/{2})\ .\ q(q+1)\ +\ {q(q+1)}/{2}&=& \\ \ \ \ \ 1\ +\ {q(q-1)^2}/{4}\ +\ (q^2-1)\ +\ {(q-5)q(q+1)}/{4}\ +\ {q(q+1)}/{2}&=&|G|.
\end{eqnarray*}
Hence by calculating all conjugacy classes of $\PSL(2,q)$, we see that $\D(\PSL(2,q))$ is $3\K_1$ or $\K_2+2\K_1$. (See Figure \ref{f1}.)
\end{proof}
\begin{figure}[here]
\begin{center}
\includegraphics[height=3 cm]{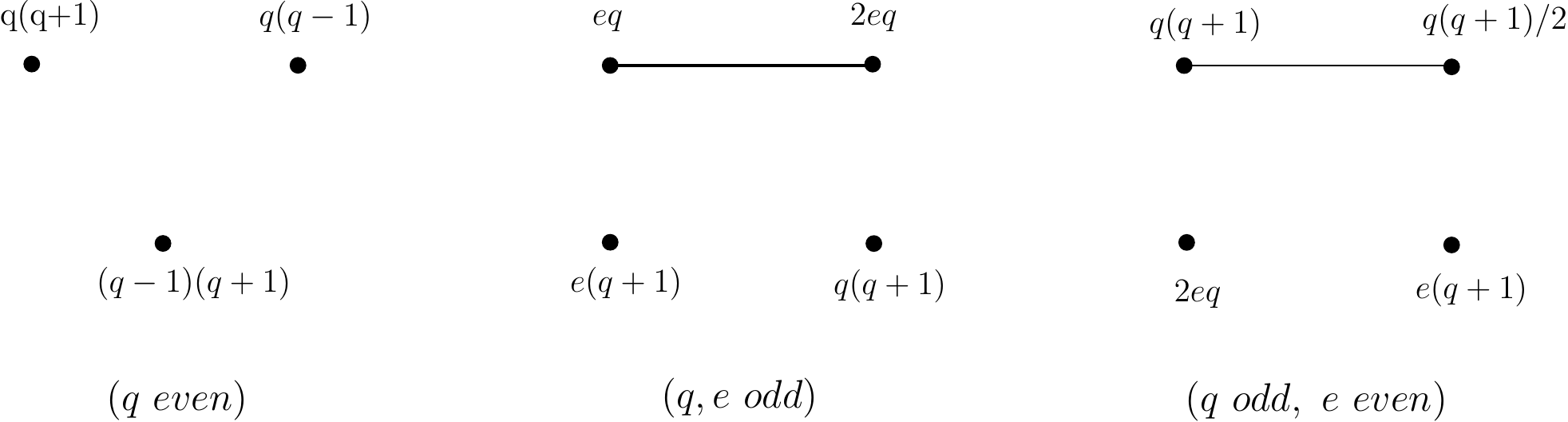}
\caption{The graph $\D(\PSL(2,q))$}
\label{f1}
\end{center}
\end{figure}
Now we calculate the conjugacy class sizes of Suzuki simple groups.
\begin{theorem}\quad\label{s3}
Let $G=\Sz(q)$. Then $\D(G)=\K_2+3\K_1$.
\end{theorem}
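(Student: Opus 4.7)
The plan is to enumerate $\cs(G)\setminus\{1\}$ by covering $G\setminus\{1\}$ with four subgroups of pairwise coprime order, and then to read off divisibilities among the resulting class sizes. The four subgroups are the three tori $A_0 = H$, $A_1$, $A_2$ of orders $q-1$, $q+r+1$, $q-r+1$ from Lemma~\ref{s1}, together with the Sylow $2$-subgroup $K$ of order $q^2$. Each $A_i$ is disjoint from its conjugates by Lemma~\ref{s4}, and $K$ is too by Theorem~\ref{z2}(ii). Using the factorization $(q+r+1)(q-r+1) = q^2+1$ together with $q = 2^m$, the four orders are pairwise coprime, so every non-identity element of $G$ is conjugate into exactly one of $A_0, A_1, A_2, K$.

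For each torus, Lemma~\ref{s1} gives $C_G(x) = A_i$ for $1 \neq x \in A_i$, so the resulting class sizes are $q^2(q^2+1)$, $q^2(q-1)(q-r+1)$, and $q^2(q-1)(q+r+1)$. By Lemma~\ref{z3}, each class meets $A_i$ in $[N_G(A_i):A_i]$ elements, namely $2$ for $i = 0$ (Theorem~\ref{z2}(iii)) and $4$ for $i = 1, 2$ (Lemma~\ref{s1}), so we obtain $(q-2)/2$, $(q+r)/4$, and $(q-r)/4$ classes respectively. For $K$, the centre $Z(K)$ has order $q$ and consists of involutions, so Theorem~\ref{z2}(vii) (with $n = q^2$ even) places them all in a single class of size $(q-1)(q^2+1)$. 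For $x = (\alpha,\beta) \in K$ with $\alpha \neq 0$, Lemma~\ref{s2} gives $C_G(x) = \langle(\alpha,0), Z(K)\rangle$, which has order $2q$ since $(\alpha,0)^2 \in Z(K)$ in characteristic $2$, so the class size is $q(q-1)(q^2+1)/2$; counting the $q^2-q$ non-central elements across the $q^2+1$ mutually trivially intersecting conjugates of $K$ forces exactly two such classes. I would verify the class equation for $|G|$ closes as a sanity check.

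This yields five distinct sizes in $\cs(G)\setminus\{1\}$:
\[
(q-1)(q^2+1),\ \tfrac{q(q-1)(q^2+1)}{2},\ q^2(q^2+1),\ q^2(q-1)(q-r+1),\ q^2(q-1)(q+r+1).
\]
The first divides the second (ratio $q/2$), contributing one edge. For the nine remaining unordered pairs I would reduce each ratio to lowest terms and certify non-integrality using a short dictionary of coprimality facts: $q = 2^m$ is coprime to each of the odd numbers $q-1$, $q+r+1$, $q-r+1$; the latter two are coprime to each other and to $q-1$ (via $r^2 = 2q$, so any common prime $p$ of $q-1$ and $q \pm r + 1$ forces $r^2 \equiv 4 \equiv 2 \pmod p$ hence $p = 2$, impossible); and simple size estimates such as $q-r+1 > r$ for $q \geq 8$ dispose of ratios like $(q+r+1)/(q-r+1)$ and $(q-1)/(q-r+1)$. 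The main obstacle I anticipate is corralling the arithmetic for the three pairs among the sizes involving $q \pm r + 1$; the ratios involving the first two sizes against the last three are immediate once $\gcd(q, q \pm r + 1) = 1$ is noted. Collecting, the only edge is $\{(q-1)(q^2+1),\, q(q-1)(q^2+1)/2\}$, giving $\D(G) = \K_2 + 3\K_1$.
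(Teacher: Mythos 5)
Your proposal is correct and follows essentially the same route as the paper: the same covering of $G\setminus\{1\}$ by the four subgroups $H$, $K$, $A_1$, $A_2$ of pairwise coprime order, the same centralizer computations via Lemmas~\ref{s1}, \ref{s2}, \ref{s4} and \ref{z3}, and the same class-equation check. Your explicit coprimality dictionary for ruling out the nine non-edges is actually more detailed than the paper, which leaves that final divisibility check to the reader.
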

\begin{proof}
By Lemmas~\ref{s1} and \ref{s2}, $G$ has four subgroups $H,~K,~A_1$ and $A_2$ with pairwise coprime orders.

We start by calculating the conjugacy class sizes of elements of $H$.
By Lemma \ref{s1}, $C_G(h)=H$ for every $1\neq h\in H$.
Since $H$ is disjoint from its conjugates, by Lemma \ref{z3}, $h$ has $[N_G(H):C_G(h)]=2$ conjugates in $H$.
So there are ${(q-2)}/{2}$ conjugacy classes in $G$ of size $q^2(q^2+1)$ which contains elements of $H$.

Consider the conjugacy classes of elements of $K$. Let $x_0=(\alpha,\beta)\in K$ be an arbitrary element.
If $1\neq x_0\in Z(K)$ then $x_0$ is an involution and by Theorem \ref{z2} part (vii), $G$ has only one class of involutions of size $(q-1)(q^2+1)$.
If $x_0\in K\setminus Z(K)$ then by Lemma~\ref{s2}, $C_G(x_0)=\{(x,y)|~y\in \mathbb{F}_q,~x=0~or~\alpha\}$.
This means that $|C_G(x_0)|=2q$ and $|x_0^G|={q}(q-1)(q^2+1)/2$.
Since $K$ is disjoint from its conjugates, by Lemma \ref{z3}, $x_0$ has $[K:C_G(x_0)]={q}(q-1)/{2}$ conjugates
in $K\setminus Z(K)$. So there exist ${(q^2-q)}/{({q}(q-1)/{2})}=2$ conjugacy classes of
size ${q}(q-1)(q^2+1)/{2}$ in $G$ which contains elements of $K\setminus Z(K)$.

Finally we consider two subgroups $A_1$ and $A_2$.
By Lemma \ref{s1}, $A_1$ and $A_2$ are of order $n_1=q+r+1$ and $n_2=q-r+1$ respectively.
Also $[N_G(A_i):A_i]=4$ and for every $x\in A_i$, $C_G(x)=A_i$ where $i=1,2$.
So for every $x\in A_i,~i=1,2,$ we have $[N_G(A_i):C_G(x)]=4$ and by Lemmas \ref{s4} and \ref{z3},
there are just four elements of $A_i$ which are conjugate with $x$ in $G$.
Hence there are ${(q+r)}/{4}$ conjugacy classes of size ${q^2(q-1)(q^2+1)}/{(q+r+1)}=q^2(q-1)(q-r+1)$ which
contains elements of $A_1$ and ${(q-r)}/{4}$ conjugacy classes of size
${q^2(q-1)(q^2+1)}/{(q-r+1)}=q^2(q-1)(q+r+1)$ which contains elements of $A_2$.

By using the class equation for $G$ we see that
\begin{eqnarray*}
&&1~+~{(q-2)}~.~q^2(q^2+1)/{2}~+~(q-1)(q^2+1)~+~2~.~{q}(q-1)(q^2+1)/{2}+
\\&& {(q+r)}~.~q^2(q-1)(q-r+1)/{4}~+~{(q-r)}~.~q^2(q-1)(q+r+1)/{4}=|G|.
\end{eqnarray*}
This implies that $\D(\Sz(q))=\K_2+3\K_1$. (See Figure \ref{f2}.)
\end{proof}
\begin{figure}[here]\label{f2}
\begin{center}
\includegraphics[height=3 cm]{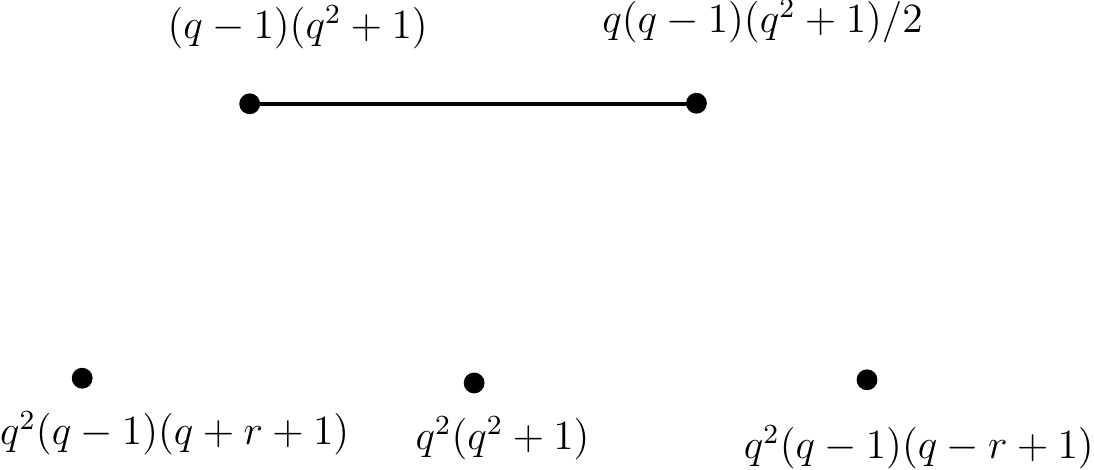}
\caption{The graph $\D(\Sz(q))$}
\end{center}
\end{figure}

\end{document}